%% LyX 2.1.4 created this file.  For more info, see http://www.lyx.org/.
%% Do not edit unless you really know what you are doing.
\documentclass[oneside,english]{amsart}
\usepackage[T1]{fontenc}
\usepackage[latin9]{inputenc}
\usepackage{amstext}
\usepackage{amsthm}
\usepackage{amssymb}

\makeatletter
%%%%%%%%%%%%%%%%%%%%%%%%%%%%%% Textclass specific LaTeX commands.
\numberwithin{equation}{section}
\numberwithin{figure}{section}
\theoremstyle{plain}
\newtheorem{thm}{\protect\theoremname}[section]
  \theoremstyle{plain}
  \newtheorem{lem}[thm]{\protect\lemmaname}
  \theoremstyle{remark}
  \newtheorem{rem}[thm]{\protect\remarkname}

\@ifundefined{date}{}{\date{}}
%%%%%%%%%%%%%%%%%%%%%%%%%%%%%% User specified LaTeX commands.
\usepackage{algorithm} 
\usepackage{algorithmic} 
\usepackage{dsfont}
\usepackage{pgf,tikz}
\usepackage{mathrsfs}
\usetikzlibrary{arrows}

\makeatother

\usepackage{babel}
  \providecommand{\lemmaname}{Lemma}
  \providecommand{\remarkname}{Remark}
\providecommand{\theoremname}{Theorem}

\begin{document}

\title{A proof of the bunkbed conjecture for the complete graph at $p=\frac{1}{2}$}

\author[P. de Buyer]{Paul de Buyer}
\address[P. de Buyer]{Universit\'e Paris Ouest Nanterre La D\'efense - Modal'X, 200 avenue
de la R\'epublique 92000 Nanterre, France}
\email{debuyer@math.cnrs.fr} 

\keywords{Percolation, Bunkbed graph, Combinatorics, Bunkbed Conjecture}

\subjclass{82B43, 60K35}

\begin{abstract}
The bunkbed of a graph $G$ is the graph $G\times\left\{ 0,1\right\} $.
It has been conjectured that in the independent bond percolation model,
the probability for $\left(u,0\right)$ to be connected with $\left(v,0\right)$
is greater than the probability for $\left(u,0\right)$ to be connected
with $\left(v,1\right)$, for any vertex $u$, $v$ of $G$. In this
article, we prove this conjecture for the complete graph in the case
of the independent bond percolation of parameter $p=1/2$. 
\end{abstract}

\maketitle

\section{Introduction}

Percolation theory has been widely studied over the last decades and
yet, several intuitive results are very hard to prove rigorously.
This is the case of the bunkbed conjecture formulated by Kasteleyn
published as a remark in \cite{van2001correlation}) which investigates
a notion of graph distance through percolation. 

A bunkbed graph of a graph $\widetilde{G}=\left(\widetilde{V},\widetilde{E}\right)$
is the graph $G=\left(V,E\right)=\widetilde{G}\times\left\{ 0,1\right\} $,
to which we have added the edges that connect the vertices $\left(x,0\right)$
to $\left(x,1\right)$ for all vertices $x\in V$, see Figure $\ref{FigureQuiDechire}$.
It is natural to distinguish vertices whether they are on the lower
level, the vertices $\left(x,0\right)$, or on the upper level, the
vertices $\left(x,1\right)$. 

The bunkbed conjecture (see \cite{haggstrom2003probability} for a
more general setting) suggests that two vertices $u=\left(x,0\right)$
and $v=\left(y,0\right)$ on the lower level are closer than $u$
and $v'=\left(y,1\right)$. Closeness of two vertices has to be understood
through the probability of the existence of an open path in the sense
of percolation. 

The percolation model is defined as follow. We open each edges of
$E$ independently with probability $p$ and close them with probability
$1-p$ and we write $\mathbb{P}_{p}$ the law associated to this percolation
model. We call a configuration, an element $\omega=\left(\omega_{e}\right)_{e\in E}\in\left\{ 0,1\right\} ^{E}$
corresponding to the bond percolation model where $\omega_{e}=0$
means that the edge $e$ is closed and $1$ means that the edge $e$
is open. We call an open path a path of open edges and for two vertices
$x,y\in V$, we write $x\leftrightarrow y$ if there exists an open
path between $x$ and $y$. By convention, for any configuration,
a vertex is always connected to itself, i.e. $x\leftrightarrow x$.
For a general introduction on percolation, see \cite{GrimmettPercolation}.

In this article we prove the bunkbed conjecture for the complete graph
when the percolation parameter $p$ is equal to $1/2$.
\begin{thm}
\label{thm:MainTheoremBunkBed-1}Let $G$ be the bunkbed graph of
the complete graph $\widetilde{G}=K_{n}$. For all vertices $x,y$
of $K_{n}$:
\[
\mathbb{P}_{\frac{1}{2}}\left(\left(x,0\right)\leftrightarrow\left(y,0\right)\right)\geq\mathbb{P}_{\frac{1}{2}}\left(\left(x,0\right)\leftrightarrow\left(y,1\right)\right)
\]

\end{thm}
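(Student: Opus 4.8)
The key point at $p=\tfrac12$ is that every configuration $\omega\in\{0,1\}^{E}$ has the same weight $2^{-|E|}$, so the theorem is a counting statement about sets of configurations, and I aim to produce (after reductions) an injection from the configurations realising $(x,0)\leftrightarrow(y,1)$ into those realising $(x,0)\leftrightarrow(y,0)$. If $x=y$ the left‑hand side is $1$; so assume $x\ne y$ and, by vertex‑transitivity of $K_{n}$, take $x=1$, $y=2$. Write $a=(1,0)$, $a'=(1,1)$, $b=(2,0)$, $b'=(2,1)$, and let $C=C(\omega)$ be the open cluster of $a$. Then $\mathbb P_{1/2}(a\leftrightarrow b)-\mathbb P_{1/2}(a\leftrightarrow b')=\mathbb P(b\in C,\,b'\notin C)-\mathbb P(b'\in C,\,b\notin C)$. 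The global level‑flip $\sigma\colon(v,i)\mapsto(v,1-i)$ is a measure‑preserving automorphism of $G$ which, on the configurations with $a'\in C$, replaces $C$ by its reflection and hence exchanges $\{b\in C\}$ with $\{b'\in C\}$; so those configurations contribute equally to both terms. It therefore suffices to prove
\[
\mathbb P\!\left(b\in C,\,a'\notin C,\,b'\notin C\right)\ \ge\ \mathbb P\!\left(b'\in C,\,a'\notin C,\,b\notin C\right),
\]
and on each of these events the vertical edges at $1$ and at $2$ are forced closed, so I may work in the bunkbed of $K_{n}$ carrying vertical edges only at $3,\dots,n$.

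Next I would condition on the \emph{vertex set} $K$ of $C$. If $e(K)$ and $\partial(K)$ are the numbers of edges of $G$ inside $K$ and on its boundary, then $\mathbb P(C=K)=\tau(K)\,2^{-e(K)-\partial(K)}$, where $\tau(K)$ counts the connected spanning subgraphs of the graph induced on $K$. The inequality becomes $\sum_{K\in\mathcal A}\mathbb P(C=K)\ge\sum_{K\in\mathcal B}\mathbb P(C=K)$ with $\mathcal A=\{K:a,b\in K,\ a',b'\notin K\}$ and $\mathcal B=\{K:a,b'\in K,\ a',b\notin K\}$. The map $K\mapsto K^{*}:=(K\setminus\{b'\})\cup\{b\}$ is a bijection $\mathcal B\to\mathcal A$, and bookkeeping of the edges at $b$ and $b'$ gives
\[
\frac{\mathbb P(C=K^{*})}{\mathbb P(C=K)}=\frac{\tau(K^{*})}{\tau(K)}\,2^{\,1+\kappa_{0}-\kappa_{1}},\qquad \kappa_{i}:=\bigl|\{v\ge 3:(v,i)\in K\}\bigr|,
\]
so the theorem is equivalent to $\sum_{K\in\mathcal B}\bigl(\tau(K^{*})\,2^{1+\kappa_{0}-\kappa_{1}}-\tau(K)\bigr)2^{-e(K)-\partial(K)}\ge 0$.

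Finally I would sort the clusters $K\in\mathcal B$ by their \emph{shape} $(p,q,t)$, where $p,q$ are the numbers of level‑$0$ and level‑$1$ vertices of $K$ and $t$ the number of vertical edges inside $K$. For such a $K$ the induced graph is a complete graph on $p$ vertices (level $0$) glued to a complete graph on $q$ vertices (level $1$) along $t$ vertical edges, so $\tau(K)$ depends only on $(p,q,t)$ — write it $\mathsf T(p,q,t)$, symmetric in $p,q$, and with a product‑type formula precisely because all pieces are complete — and $e(K)+\partial(K)$ likewise depends only on the shape. Writing $N(p,q,t)$ for the number of $K\in\mathcal B$ of shape $(p,q,t)$, an explicit product of binomial coefficients, and reindexing the $\mathcal A$‑sum through $K\mapsto K^{*}$ (which sends shape $(p,q,t)$ to $(p+1,q-1,t)$), the inequality collapses to
\[
\sum_{p,q,t}\bigl(N(p-1,q+1,t)-N(p,q,t)\bigr)\,\mathsf T(p,q,t)\,2^{\binom{n-p}{2}+\binom{n-q}{2}+n-p-q+t}\ \ge\ 0 .
\]
This is the crux and where I expect the main obstacle: the summand is positive for level‑$0$‑heavy shapes ($p>q$) and negative for level‑$1$‑heavy ones, and it is \emph{not} non‑negative term by term — not even after pairing $(p,q,t)$ with $(q,p,t)$, for which $\mathsf T$ and the power of $2$ coincide. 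One must therefore carry out a genuine summation estimate, using a sufficiently sharp description of the clique‑gluing counts $\mathsf T(p,q,t)$ together with the binomial identities for $N$, to show that the level‑$0$‑heavy contributions dominate the level‑$1$‑heavy ones once summed; I expect the bulk of the proof to be this count.
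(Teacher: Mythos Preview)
Your decomposition is correct and lands on essentially the same inequality as the paper: classifying the cluster of $(x,0)$ by its shape $(p,q,t)$, with your $\mathsf T(p,q,t)$ equal to the paper's $GC(x,y,z)$ (connected spanning subgraphs of two cliques joined by $t$ rungs) and your signed count $N(p-1,q+1,t)-N(p,q,t)$ playing the role of the paper's $q(x,y,z)$. Your preliminary reductions via the level-flip $\sigma$ and the bijection $K\mapsto K^{*}$ are clean and do not appear in the paper, but they do not bypass the crux.

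The gap is that you stop exactly where the work begins, and one of your expectations is wrong: there is \emph{no} product-type formula for $\mathsf T(p,q,t)$, and the paper never tries to compute it. The proof rests on two ingredients you have not supplied. First, a \emph{monotonicity} lemma (the paper's Lemma~2.1): for fixed $p+q$ and $t$, the weight $\mathsf T(p,q,t)\cdot 2^{\text{outside}}$ is increasing in $|p-q|$. This is obtained by sandwiching $\mathsf T$ between crude upper and lower bounds, feeding in the asymptotic $2^{\binom m2}(1-2m2^{-m}+o(2^{-m}))$ for the number of connected labelled graphs on $m$ vertices, and checking the finitely many small cases by computer. Second, an \emph{exact zero-sum identity}: on each slice $p+q=\text{const}$, $t=\text{const}$, the signed counts sum to zero (in your formulation this is just a telescoping sum; in the paper's it is the content of Lemmas~3.3--3.4), with sign $\le 0$ on the balanced shapes and $\ge 0$ on the unbalanced ones. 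These two facts combine via a Chebyshev-type argument --- negative terms carry the small weights, positive terms the large ones, and the unweighted sum vanishes --- to yield the inequality. Without the monotonicity of $\mathsf T$ in $|p-q|$, which is the genuinely nontrivial analytic input, your outline cannot be closed.
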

In the previous works of S. Linusson and M. Leander, the conjecture
has been proven for the outerplanar graphs and the wheels graphs using
the so-called method of minimal counter-example, see \cite{leandersjalvstandiga,linusson2011percolation}.
However, this method might not be suitable for the complete graph
due to the its geometric nature. We have chosen to study the conjecture
for the complete graph because we think that it should be easier to
show the following proposition than the bunkbed conjecture itself:
``If the bunkbed conjecture is verified for a graph $\widetilde{G}$,
then it is verified for the graph $\widetilde{G}\backslash\left\{ e\right\} $
where $e$ is an edge of $\widetilde{G}$''. 

One can note that the bunkbed conjecture is true whenever $p$ is
small enough. Indeed, in these cases, only the shortest paths can
be open to connect $\left(x,0\right)$ with $\left(y,\text{0}\right)$
or $\left(y,1\right)$; since the shortest path from $\left(x,0\right)$
to $\left(y,1\right)$ is longer than $\left(x,0\right)$ to $\left(y,0\right)$,
the conjecture is proven. Note $d\left(.,.\right)$ the usual graph
distance, then one can prove that for all vertices $u,v$ and $w$,
$d\left(u,v\right)>d\left(u,w\right)\Rightarrow\mathbb{P}_{p}\left(u\leftrightarrow v\right)>\mathbb{P}_{p}\left(u\leftrightarrow w\right)$
for sufficiently small $p$.

Finally, we underline some related works on bunkbed graphs. In the
random walk field, an analogical problem of the reaching time of a
random walk has been studied, see \cite{bollobas1997random,haggstrom1998conjecture,van2006some}.
In the random directed graph field, it has been shown that it is equivalent
to study random orientation of edges on the graph and percolation
on the graph with Bernouilli paramater $1/2$ (as it is in our case),
see \cite{karp1990transitive,linusson2011percolation,mcdiarmid1981general}. 

A first approach of the problem would be to study the ratio of the
probability of connection, see $\left(\ref{eq:ratioProbaConnection}\right)$,
and to study the derivative according to $p$, the Bernouilli parameter
associated to the probability of opening an edge. Let $u=\left(x,0\right)$
and $v=\left(y,0\right)$ be two vertices on the lower level and define
the vertex $v'$ as $v'=\left(x,1\right)$, the vertex above $v$,
then the ratio of the probability of connection is written as: 
\begin{equation}
\frac{\mathbb{P}_{p}\left(u\leftrightarrow v'\right)}{\mathbb{P}_{p}\left(u\leftrightarrow v\right)}\label{eq:ratioProbaConnection}
\end{equation}
As a result of the previous remark whenever $p$ tends to $0$, the
ratio of $\left(\ref{eq:ratioProbaConnection}\right)$ tends to $0$,
and clearly when $p$ is equal to $1$, this ratio is equal to $1$.
Because the events $\left\{ u\leftrightarrow v\right\} $ and $\left\{ u\leftrightarrow v'\right\} $
are increasing events, the derivatives can be studied using Russo's
formula. However, even using Russo's formula, see \cite{margulis1974probabilistic,russo1981critical},
it reveals itself strenuous to study it. One can notice that the derivative
of $\mathbb{P}_{p}\left(u\leftrightarrow v\right)$ cannot be always
greater than the derivative of $\mathbb{P}_{p}\left(u\leftrightarrow v'\right)$,
since they are both equal to $0$ when $p=0$ and equal to $1$ when
$p=1$. However, we conjecture here that the derivative of the ratio
is increasing, meaning that for all $0<p<1$:
\[
\mathbb{P}_{p}\left[u\leftrightarrow v'\right]\partial_{p}\mathbb{P}_{p}\left[u\leftrightarrow v\right]\leqslant\mathbb{P}_{p}\left[u\leftrightarrow v\right]\partial_{p}\mathbb{P}_{p}\left[u\leftrightarrow v'\right]
\]

From now on, we note $G=\left(V,E\right)$ the bunkbed graph of the
complete graph and we label the vertices of $V$ from 1 to $2n$ such
that $V=\left\{ s_{i},\,i\in\left[1;2n\right]\cap\mathbb{N}\right\} $
and $\forall i,j\in\left[1;n\right]\cap\mathbb{N}$, $s_{i}\sim s_{j}$
and $s_{i}\sim s_{i+n}$ where $\sim$ is the neighbour relation,
$x\sim y\Leftrightarrow\left\{ x,y\right\} \in E$. We can consider
that the vertices labelled from 1 to $n$ are the vertices of the
lower level (or level 1) and the vertices labelled from $n+1$ to
$2n$ are the vertices of the upper level (or level 2). In the complete
graph, two vertices play the same role. Therefore, it is enough to
prove the bunkbed conjecture for two vertices, here $s_{1}$ and $s_{n}$.
Moreover, it is trivial to see that $\mathbb{P}\left(s_{1}\leftrightarrow s_{1}\right)=1\geq\mathbb{P}\left(s_{1}\leftrightarrow s_{n+1}\right)$.

Our approach, based on combinatorics since $p=1/2$, is to decompose
the graph into different appropriate classes which solve the bunbed
conjecture. We count the number of ways to connect $s_{1}$ to $s_{n}$
and to connect $s_{1}$ and $s_{2n}$. The idea of the proof is the
following. We define the \emph{main component} as the connected component
that contains the vertex $s_{1}$ and all the vertices that are connected
to $s_{1}$ by an open path. Then, we distinguish different classes
of main component depending on the number of vertices on the lower
level, on the upper level, and depending on the number of parallel
vertices, notion that will be defined later. Lemma $\ref{lem:LemmeClef1}$
will give the number of ways to connect the vertices of a main component.
Lemmas $\ref{lem:LemmeClef2_1}$ and $\ref{lem:LemmeClef2_2}$ will
give the number of configurations containing a main component with
$x$ vertices on the lower level, $y$ vertices on the upper level
and a key argument to properly add them together. Finally, we prove
the main Theorem in section 4.

\section{Covering Graphs}

\begin{figure}
\definecolor{uququq}{rgb}{0.25098039215686274,0.25098039215686274,0.25098039215686274}   \begin{tikzpicture}[line cap=round,scale=0.5, line join=round,>=triangle 45,x=1.0cm,y=1.0cm] \clip(15.45711045620525,1.0094304872935989) rectangle (29.264907256985282,10.595008431636728); \fill[color=uququq,fill=uququq,fill opacity=0.1] (18.945670332138878,9.68891624294028) -- (17.033618989421115,8.135374526982087) -- (17.978306467516347,5.978306467516348) -- (19.978306467516347,5.978306467516348) -- (20.897556077829943,8.095540124008803) -- cycle; \fill[color=uququq,fill=uququq,fill opacity=0.1] (18.96736386462253,5.710609775423925) -- (17.055312521904767,4.157068059465733) -- (18.,2.) -- (20.,2.) -- (20.919249610313596,4.117233656492446) -- cycle; \fill[color=uququq,fill=uququq,fill opacity=0.1] (25.352206533612073,9.636951268385594) -- (23.44015519089431,8.083409552427405) -- (24.384842668989542,5.926341492961671) -- (27.30409227930314,8.04357514945412) -- cycle; \fill[color=uququq,fill=uququq,fill opacity=0.1] (27.30409227930314,4.043575149454118) -- (24.384842668989542,1.9263414929616722) -- (26.384842668989542,1.9263414929616722) -- cycle; \draw [color=uququq] (4.9673638646225315,5.710609775423926)-- (3.055312521904767,4.157068059465734); \draw [color=uququq] (3.055312521904767,4.157068059465734)-- (4.,2.); \draw [color=uququq] (4.,2.)-- (6.,2.); \draw [color=uququq] (6.,2.)-- (6.919249610313596,4.117233656492448); \draw [color=uququq] (4.9673638646225315,5.710609775423926)-- (6.919249610313596,4.117233656492448); \draw [color=uququq] (4.9673638646225315,5.710609775423926)-- (4.,2.); \draw [color=uququq] (4.,2.)-- (6.919249610313596,4.117233656492448); \draw [color=uququq] (6.919249610313596,4.117233656492448)-- (3.055312521904767,4.157068059465734); \draw [color=uququq] (3.055312521904767,4.157068059465734)-- (6.,2.); \draw [color=uququq] (4.9673638646225315,5.710609775423926)-- (6.,2.); \draw [color=uququq] (4.945670332138879,9.68891624294028)-- (3.0336189894211154,8.135374526982085); \draw [color=uququq] (3.0336189894211154,8.135374526982085)-- (3.978306467516348,5.978306467516349); \draw [color=uququq] (3.978306467516348,5.978306467516349)-- (5.978306467516348,5.978306467516349); \draw [color=uququq] (5.978306467516348,5.978306467516349)-- (6.897556077829944,8.095540124008801); \draw [color=uququq] (4.945670332138879,9.68891624294028)-- (6.897556077829944,8.095540124008801); \draw [color=uququq] (4.945670332138879,9.68891624294028)-- (3.978306467516348,5.978306467516349); \draw [color=uququq] (3.978306467516348,5.978306467516349)-- (6.897556077829944,8.095540124008801); \draw [color=uququq] (6.897556077829944,8.095540124008801)-- (3.0336189894211154,8.135374526982085); \draw [color=uququq] (3.0336189894211154,8.135374526982085)-- (5.978306467516348,5.978306467516349); \draw [color=uququq] (4.945670332138879,9.68891624294028)-- (5.978306467516348,5.978306467516349); \draw [color=uququq] (3.0336189894211154,8.135374526982085)-- (3.055312521904767,4.157068059465734); \draw [color=uququq] (3.978306467516348,5.978306467516349)-- (4.,2.); \draw [color=uququq] (5.978306467516348,5.978306467516349)-- (6.,2.); \draw [color=uququq] (6.897556077829944,8.095540124008801)-- (6.919249610313596,4.117233656492448); \draw [dash pattern=on 4pt off 4pt,color=uququq] (4.945670332138879,9.68891624294028)-- (4.9673638646225315,5.710609775423926); \draw [dotted,color=uququq] (11.352206533612074,5.636951268385596)-- (9.440155190894309,4.0834095524274066); \draw [dotted,color=uququq] (9.440155190894309,4.0834095524274066)-- (10.384842668989542,1.9263414929616722); \draw [color=uququq] (10.384842668989542,1.9263414929616722)-- (12.384842668989542,1.9263414929616722); \draw [color=uququq] (12.384842668989542,1.9263414929616722)-- (13.30409227930314,4.043575149454119); \draw [dotted,color=uququq] (11.352206533612074,5.636951268385596)-- (13.30409227930314,4.043575149454119); \draw [dotted,color=uququq] (11.352206533612074,5.636951268385596)-- (10.384842668989542,1.9263414929616722); \draw [color=uququq] (10.384842668989542,1.9263414929616722)-- (13.30409227930314,4.043575149454119); \draw [dotted,color=uququq] (13.30409227930314,4.043575149454119)-- (9.440155190894309,4.0834095524274066); \draw [dotted,color=uququq] (9.440155190894309,4.0834095524274066)-- (12.384842668989542,1.9263414929616722); \draw [dotted,color=uququq] (11.352206533612074,5.636951268385596)-- (12.384842668989542,1.9263414929616722); \draw [color=uququq] (11.35220653361207,9.636951268385594)-- (9.440155190894309,8.083409552427405); \draw [color=uququq] (9.440155190894309,8.083409552427405)-- (10.384842668989542,5.926341492961672); \draw [dotted,color=uququq] (10.384842668989542,5.926341492961672)-- (12.384842668989542,5.926341492961672); \draw [dotted,color=uququq] (12.384842668989542,5.926341492961672)-- (13.30409227930314,8.04357514945412); \draw [color=uququq] (11.35220653361207,9.636951268385594)-- (13.30409227930314,8.04357514945412); \draw [color=uququq] (11.35220653361207,9.636951268385594)-- (10.384842668989542,5.926341492961672); \draw [color=uququq] (10.384842668989542,5.926341492961672)-- (13.30409227930314,8.04357514945412); \draw [color=uququq] (13.30409227930314,8.04357514945412)-- (9.440155190894309,8.083409552427405); \draw [dotted,color=uququq] (9.440155190894309,8.083409552427405)-- (12.384842668989542,5.926341492961672); \draw [dotted,color=uququq] (11.35220653361207,9.636951268385594)-- (12.384842668989542,5.926341492961672); \draw [dotted,color=uququq] (9.440155190894309,8.083409552427405)-- (9.440155190894309,4.0834095524274066); \draw [color=uququq] (10.384842668989542,5.926341492961672)-- (10.384842668989542,1.9263414929616722); \draw [dotted,color=uququq] (12.384842668989542,5.926341492961672)-- (12.384842668989542,1.9263414929616722); \draw [color=uququq] (13.30409227930314,8.04357514945412)-- (13.30409227930314,4.043575149454119); \draw [dotted,color=uququq] (11.35220653361207,9.636951268385594)-- (11.352206533612074,5.636951268385596); \draw [color=uququq] (18.96736386462253,5.710609775423925)-- (17.055312521904767,4.157068059465733); \draw [color=uququq] (17.055312521904767,4.157068059465733)-- (18.,2.); \draw [color=uququq] (18.,2.)-- (20.,2.); \draw [color=uququq] (20.,2.)-- (20.919249610313596,4.117233656492446); \draw [color=uququq] (18.96736386462253,5.710609775423925)-- (20.919249610313596,4.117233656492446); \draw [color=uququq] (18.96736386462253,5.710609775423925)-- (18.,2.); \draw [color=uququq] (18.,2.)-- (20.919249610313596,4.117233656492446); \draw [color=uququq] (20.919249610313596,4.117233656492446)-- (17.055312521904767,4.157068059465733); \draw [color=uququq] (17.055312521904767,4.157068059465733)-- (20.,2.); \draw [color=uququq] (18.96736386462253,5.710609775423925)-- (20.,2.); \draw [color=uququq] (18.945670332138878,9.68891624294028)-- (17.033618989421115,8.135374526982087); \draw [color=uququq] (17.033618989421115,8.135374526982087)-- (17.978306467516347,5.978306467516348); \draw [color=uququq] (17.978306467516347,5.978306467516348)-- (19.978306467516347,5.978306467516348); \draw [color=uququq] (19.978306467516347,5.978306467516348)-- (20.897556077829943,8.095540124008803); \draw [color=uququq] (18.945670332138878,9.68891624294028)-- (20.897556077829943,8.095540124008803); \draw [color=uququq] (18.945670332138878,9.68891624294028)-- (17.978306467516347,5.978306467516348); \draw [color=uququq] (17.978306467516347,5.978306467516348)-- (20.897556077829943,8.095540124008803); \draw [color=uququq] (20.897556077829943,8.095540124008803)-- (17.033618989421115,8.135374526982087); \draw [color=uququq] (17.033618989421115,8.135374526982087)-- (19.978306467516347,5.978306467516348); \draw [color=uququq] (18.945670332138878,9.68891624294028)-- (19.978306467516347,5.978306467516348); \draw [color=uququq] (17.033618989421115,8.135374526982087)-- (17.055312521904767,4.157068059465733); \draw [color=uququq] (17.978306467516347,5.978306467516348)-- (18.,2.); \draw [color=uququq] (19.978306467516347,5.978306467516348)-- (20.,2.); \draw [color=uququq] (20.897556077829943,8.095540124008803)-- (20.919249610313596,4.117233656492446); \draw [dash pattern=on 4pt off 4pt,color=uququq] (18.945670332138878,9.68891624294028)-- (18.96736386462253,5.710609775423925); \draw [dotted,color=uququq] (25.352206533612073,5.636951268385595)-- (23.44015519089431,4.083409552427406); \draw [dotted,color=uququq] (23.44015519089431,4.083409552427406)-- (24.384842668989542,1.9263414929616722); \draw [color=uququq] (24.384842668989542,1.9263414929616722)-- (26.384842668989542,1.9263414929616722); \draw [color=uququq] (26.384842668989542,1.9263414929616722)-- (27.30409227930314,4.043575149454118); \draw [dotted,color=uququq] (25.352206533612073,5.636951268385595)-- (27.30409227930314,4.043575149454118); \draw [dotted,color=uququq] (25.352206533612073,5.636951268385595)-- (24.384842668989542,1.9263414929616722); \draw [color=uququq] (24.384842668989542,1.9263414929616722)-- (27.30409227930314,4.043575149454118); \draw [dotted,color=uququq] (27.30409227930314,4.043575149454118)-- (23.44015519089431,4.083409552427406); \draw [dotted,color=uququq] (23.44015519089431,4.083409552427406)-- (26.384842668989542,1.9263414929616722); \draw [dotted,color=uququq] (25.352206533612073,5.636951268385595)-- (26.384842668989542,1.9263414929616722); \draw [color=uququq] (25.352206533612073,9.636951268385594)-- (23.44015519089431,8.083409552427405); \draw [color=uququq] (23.44015519089431,8.083409552427405)-- (24.384842668989542,5.926341492961671); \draw [dotted,color=uququq] (24.384842668989542,5.926341492961671)-- (26.384842668989542,5.926341492961671); \draw [dotted,color=uququq] (26.384842668989542,5.926341492961671)-- (27.30409227930314,8.04357514945412); \draw [color=uququq] (25.352206533612073,9.636951268385594)-- (27.30409227930314,8.04357514945412); \draw [color=uququq] (25.352206533612073,9.636951268385594)-- (24.384842668989542,5.926341492961671); \draw [color=uququq] (24.384842668989542,5.926341492961671)-- (27.30409227930314,8.04357514945412); \draw [color=uququq] (27.30409227930314,8.04357514945412)-- (23.44015519089431,8.083409552427405); \draw [dotted,color=uququq] (23.44015519089431,8.083409552427405)-- (26.384842668989542,5.926341492961671); \draw [dotted,color=uququq] (25.352206533612073,9.636951268385594)-- (26.384842668989542,5.926341492961671); \draw [dotted,color=uququq] (23.44015519089431,8.083409552427405)-- (23.44015519089431,4.083409552427406); \draw [color=uququq] (24.384842668989542,5.926341492961671)-- (24.384842668989542,1.9263414929616722); \draw [dotted,color=uququq] (26.384842668989542,5.926341492961671)-- (26.384842668989542,1.9263414929616722); \draw [color=uququq] (27.30409227930314,8.04357514945412)-- (27.30409227930314,4.043575149454118); \draw [dotted,color=uququq] (25.352206533612073,9.636951268385594)-- (25.352206533612073,5.636951268385595); \draw [color=uququq] (18.945670332138878,9.68891624294028)-- (17.033618989421115,8.135374526982087); \draw [color=uququq] (17.033618989421115,8.135374526982087)-- (17.978306467516347,5.978306467516348); \draw [color=uququq] (17.978306467516347,5.978306467516348)-- (19.978306467516347,5.978306467516348); \draw [color=uququq] (19.978306467516347,5.978306467516348)-- (20.897556077829943,8.095540124008803); \draw [color=uququq] (20.897556077829943,8.095540124008803)-- (18.945670332138878,9.68891624294028); \draw [color=uququq] (18.96736386462253,5.710609775423925)-- (17.055312521904767,4.157068059465733); \draw [color=uququq] (17.055312521904767,4.157068059465733)-- (18.,2.); \draw [color=uququq] (18.,2.)-- (20.,2.); \draw [color=uququq] (20.,2.)-- (20.919249610313596,4.117233656492446); \draw [color=uququq] (20.919249610313596,4.117233656492446)-- (18.96736386462253,5.710609775423925); \draw [color=uququq] (25.352206533612073,9.636951268385594)-- (23.44015519089431,8.083409552427405); \draw [color=uququq] (23.44015519089431,8.083409552427405)-- (24.384842668989542,5.926341492961671); \draw [color=uququq] (24.384842668989542,5.926341492961671)-- (27.30409227930314,8.04357514945412); \draw [color=uququq] (27.30409227930314,8.04357514945412)-- (25.352206533612073,9.636951268385594); \draw [color=uququq] (27.30409227930314,4.043575149454118)-- (24.384842668989542,1.9263414929616722); \draw [color=uququq] (24.384842668989542,1.9263414929616722)-- (26.384842668989542,1.9263414929616722); \draw [color=uququq] (26.384842668989542,1.9263414929616722)-- (27.30409227930314,4.043575149454118); \end{tikzpicture}\caption{The bunkbed graph of $K_{5}$ and an element of $G_{3,4,2}$}
\label{FigureQuiDechire}
\end{figure}
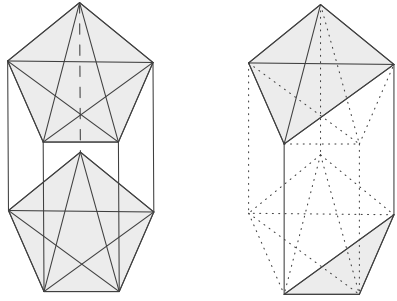
 We recall that a covering graph of a graph $G=\left(V,E\right)$
is a subgraph $G'=\left(V',E'\right)$ such that $G'$ is connected,
$V'=V$ and $E'\subseteq E$. From now on, $G$ will be used to refer
to the bunkbed graph of the complete graph. We classify the subgraphs
of $G$ according to the nomber of vertices in the upper and lower
level, as well as the number of parallel vertices. We define $G_{x,y,z}$
the set of connected subgraph of $G$ such that $\forall G'=\left(V',E'\right)\in G_{x,y,z}$
:
\begin{enumerate}
\item $\exists\lambda_{1},...,\lambda_{x}\in\left\{ 1,...,n\right\} ,\exists\mu_{1},...,\mu_{y}\in\left\{ n+1,...,2n\right\} $
such that $\forall i\in\left[1;x\right]$ $s_{\lambda_{i}}\in V'$
and $\forall j\in\left[1;y\right]$, $s_{\mu_{j}}\in V'$, $\cup_{i}\{s_{\lambda_{i}}\}\cup_{j}\{s_{\mu_{j}}\}=V$
and $\#V=x+y$
\item $\exists\lambda_{1},...,\lambda_{z}\in\left\{ 1,...,n\right\} $ such
that $\forall i,j\leqslant z,\,\lambda_{i}\neq\lambda_{j}$ and $\left\{ s_{\lambda_{i}},s_{\lambda_{i}+n}\right\} \subset V'$
\item $\forall x,y\in V'$, $\left\{ x,y\right\} \in E'$ iff $\left\{ x,y\right\} \in E$
\end{enumerate}
Graphs of $G_{x,y,z}$ can be seen as extraction of subgraph of the
bunkbed graph $G.$ Condition 1 insures that there are exactly $x$
vertices on the lower level and exactly $y$ vertices on the upper
level. Condition number 2 insures that exactly $z$ vertices among
the $y$ vertices on the upper level are above the $x$ vertices of
the lower level. Finally, condition 3 insures that vertices present
in the extraction comes along with the corresponding edges. We will
say that a graph of $G_{x,y,z}$ has $x$ vertices of level 1 and
$y$ vertices of level 2 and $z$ parallel vertices (e.g. figure $\ref{FigureQuiDechire}$).
Moreover, one can see that two graphs $G_{1}$, $G_{2}$ of $G_{x,y,z}$
are isomorphs. 

We define the function $GC:\mathbb{N}^{3}\mapsto\mathbb{N}$ which
gives the number $GC\left(x,y,z\right)$ of covering graphs of a graph
in $G_{x,y,z}$. 
\begin{lem}
\label{lem:LemmeClef1}Fix $x,x',y,y',z\in\left[0;n\right]\cap\mathbb{N}$
such that $x+y=x'+y'$ and $z\leqslant\min\left(x,x',y,y'\right)$.
If $\left|x-y\right|\geqslant\left|x'-y'\right|$ then $GC\left(x,y,z\right)\geqslant GC\left(x',y',z\right)$\end{lem}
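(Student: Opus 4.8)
The plan is to reduce the statement to a convexity property of a one‑parameter family and then prove that property with generating functions. Writing $N:=x+y$ and $C_m:=GC(m,0,0)$ for the number of connected spanning subgraphs of $K_m$, I first note that interchanging the two levels is a graph isomorphism, so $GC(a,b,z)=GC(b,a,z)$; hence $h(t):=GC(t,N-t,z)$ is symmetric about $t=N/2$ on its natural domain $\{z,\dots,N-z\}$ (the hypothesis $z\le\min(x,x',y,y')$ forces $N\ge 2z$ and places all four coordinates in this domain). After swapping within each pair so that $x\ge y$ and $x'\ge y'$, the hypothesis $|x-y|\ge|x'-y'|$ reads $x\ge x'$ with $x,x'\in\{\lceil N/2\rceil,\dots,N-z\}$. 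A convex sequence symmetric about $N/2$ is non‑decreasing on $\{\lceil N/2\rceil,\dots,N-z\}$, so it suffices to prove that $h$ is \emph{discretely convex}, i.e. $h(t-1)+h(t+1)\ge 2h(t)$. The only case in which a coordinate can be $0$ is $z=0$, and there the claim is immediate since $GC(a,b,0)=0$ whenever $a,b\ge1$ while $GC(0,N,0)=GC(N,0,0)=C_N$; so from now on $z\ge1$ and $h(t)>0$ throughout the domain.

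Next I would write down an exact formula for $GC$. A connected spanning subgraph of $G'\in G_{x,y,z}$ amounts to a spanning subgraph of the lower clique $K_x$, a spanning subgraph of the upper clique $K_y$, and a subset of the $z$ rungs, subject to global connectivity; connectivity forces every lower component to contain a lower rung‑endpoint and every upper component to contain an upper rung‑endpoint. Conditioning on the partition $\sigma$ of $\{1,\dots,z\}$ induced on the rung‑indices by the lower components, on the analogous partition $\tau$ coming from the upper components, and on the rung subset, one arrives at
\[
h(t)=\sum_{\sigma,\tau}g(t,\sigma)\,g(N-t,\tau)\,N(\sigma,\tau),
\]
where $N(\sigma,\tau)\in\mathbb{N}$ counts the rung subsets that connect the induced ``component graph'' (and does not depend on $t$), and $g(m,\sigma)$ is the number of spanning subgraphs of $K_m$ whose components induce $\sigma$ on the $z$ marked vertices with each component containing a marked vertex. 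If $\sigma$ has blocks of sizes $s_1,\dots,s_k$, a direct count gives
\[
g(z+r,\sigma)=\sum_{r_1+\dots+r_k=r}\frac{r!}{r_1!\cdots r_k!}\prod_{i=1}^{k}C_{s_i+r_i},
\qquad\text{equivalently}\qquad
\sum_{r\ge0}g(z+r,\sigma)\frac{u^r}{r!}=\prod_{i=1}^{k}F_{s_i}(u),
\]
with $F_s(u):=\sum_{i\ge0}C_{s+i}\,u^i/i!$.

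The core of the argument is then \emph{log-convexity}. First, $r\mapsto g(z+r,\sigma)$ is log-convex for every $\sigma$: this follows from (i) the log-convexity of $(C_m)_{m\ge1}$, which makes each $(C_{s+i})_{i\ge0}$ log-convex, and (ii) the fact that the binomial convolution of two log-convex sequences is log-convex, applied repeatedly to the product $\prod_i F_{s_i}(u)$. Second, for fixed $\sigma,\tau$ the sequence $t\mapsto g(t,\sigma)g(N-t,\tau)$ is the product of a log-convex sequence and the reversal of one, hence log-convex; multiplying by $N(\sigma,\tau)\ge0$ and summing over $\sigma,\tau$ keeps it log-convex (at least the term $\sigma=\tau=\{\{1,\dots,z\}\}$, equal to $C_tC_{N-t}(2^z-1)$, is strictly positive), since sums, products, and nonnegative scalings of positive log-convex sequences are log-convex, each step being a $2\times 2$ AM--GM inequality. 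Therefore $h$ is log-convex, a fortiori discretely convex, and the Lemma follows from the reduction of the first paragraph.

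The main obstacle is ingredient (i), the log-convexity of the number $C_m$ of connected labelled graphs. Writing $C_m=2^{\binom{m}{2}}q_m$, where $q_m$ is the probability that $G(m,1/2)$ is connected, the inequality $C_{m-1}C_{m+1}\ge C_m^2$ becomes $2\,q_{m-1}q_{m+1}\ge q_m^2$, which one can attempt to extract from FKG-type correlations between the increasing events ``$G(m)$ is connected'' and ``its restriction to the first $m-1$ vertices is connected''. Ingredient (ii) is classical and has a short self-contained proof; once both are available the remainder is bookkeeping (alternatively one may express both $GC(x,y,z)$ and $GC(x+1,y-1,z)$ as weighted sums over the spanning subgraphs of a single common graph from $G_{x,y-1,z}$ and look for a weight-monotone injection, although the ``purely one-sided components'' then have to be handled with care).
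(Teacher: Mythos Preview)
Your approach is genuinely different from the paper's and, in principle, more conceptual: you aim to prove that $t\mapsto GC(t,N-t,z)$ is log-convex via a structural decomposition, whereas the paper simply shows the monotone step $GC(x+1,y,z)\ge GC(x,y+1,z)$ for $x>y\ge z$ by sandwiching $GC(x,y,z)$ between $C_xC_y(2^z-1)$ and $2^{\binom{x}{2}+\binom{y}{2}}(2^z-1)$, invoking the asymptotic $C_n=2^{\binom{n}{2}}\bigl(1-2n2^{-n}+o(2^{-n})\bigr)$ for large arguments, and verifying the finitely many remaining cases $y<x<10$ by computer. Your decomposition $h(t)=\sum_{\sigma,\tau}g(t,\sigma)\,g(N-t,\tau)\,N(\sigma,\tau)$ is correct, and so are the closure properties you cite: shifts and binomial convolutions preserve log-convexity of positive sequences, reversal preserves it, products preserve it, and sums of strictly positive log-convex sequences are log-convex by the Cauchy--Schwarz/AM--GM step you indicate.

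The real gap is exactly the one you flag: the log-convexity of $(C_m)_{m\ge1}$, i.e.\ $C_{m-1}C_{m+1}\ge C_m^2$, is asserted but not proved. The FKG suggestion does not deliver $2q_{m-1}q_{m+1}\ge q_m^2$: Harris/FKG gives lower bounds of the form $\mathbb{P}(A\cap B)\ge\mathbb{P}(A)\mathbb{P}(B)$ for increasing events, and correlating ``$G(m,\tfrac12)$ is connected'' with ``its restriction to $[m-1]$ is connected'' bounds a joint probability from below, not the ratio $q_{m+1}q_{m-1}/q_m^2$ you need. Since every other step rests on this single ingredient---and you are in fact proving a strictly stronger statement than the Lemma requires, namely convexity rather than mere monotonicity away from $N/2$---the proposal as written is a programme rather than a proof. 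If you can supply a self-contained argument for $C_{m-1}C_{m+1}\ge C_m^2$ (asymptotically it is immediate from $C_m\sim 2^{\binom{m}{2}}$, so only finitely many $m$ genuinely need work; even a finite check would leave your proof no less rigorous than the paper's computer-assisted one), then your route goes through and is arguably cleaner than the crude sandwich bounds.
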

\begin{rem}
The function $GC$ is symmetric in its first two coordinates, meaning
that for all $x,y,z$, we have $GC\left(x,y,z\right)=GC\left(y,x,z\right)$.\end{rem}
\begin{proof}
To prove this lemma, by iteration, it is sufficient to prove the inequality
$GC\left(x+1,y,z\right)\geqslant GC\left(x,y+1,z\right)$ for $x>y\geqslant z$.
For this matter, we need to give an upper bound and a lower bound
of $GC\left(x,y,z\right)$. As an upper bound we use a trivial one:
we bound the number of covering graphs by the number of possible graph
knowing that one vertical edge need to be open. Since in a complete
graph with $n$ vertices, there are $n\left(n-1\right)/2$ edges,
therefore we have:

\[
GC\left(x,y,z\right)\leqslant2^{\frac{x\left(x-1\right)}{2}}\left(2^{z}-1\right)2^{\frac{y\left(y-1\right)}{2}}
\]
As a lower bound, we consider the only case where we connect by at
least one vertical edge a covering graph $K_{x}$ and a covering graph
of $K_{y}$:
\[
GC\left(x,y,z\right)\geqslant GC\left(x,0,0\right)\times\left(2^{z}-1\right)\times GC\left(0,y,0\right)
\]
Therefore, one has:
\begin{eqnarray*}
\frac{GC\left(x+1,y,z\right)}{GC\left(x,y+1,z\right)} & \geqslant & \frac{GC\left(x+1,0,0\right)\times\left(2^{z}-1\right)\times GC\left(0,y,0\right)}{2^{\frac{x\left(x-1\right)}{2}}\left(2^{z}-1\right)2^{\frac{y\left(y+1\right)}{2}}}\\
 & = & GC\left(x+1,0,0\right)\times2^{-\frac{x\left(x-1\right)}{2}}\times GC\left(0,y,0\right)\times2^{-\frac{y\left(y-1\right)}{2}}\times2^{-y}
\end{eqnarray*}
By \cite{flajolet2009analytic}, the number of covering graphs or
number of connected labelled graph with $n$ vertices is given by
the following approximation: 
\[
GC\left(n,0,0\right)=2^{\frac{n\left(n-1\right)}{2}}\left(1-2n2^{-n}+o\left(2^{-n}\right)\right)
\]
We slightly modify this approximation in the case $n\geqslant7$:
\[
GC\left(n,0,0\right)\geqslant2^{\frac{n\left(n-1\right)}{2}}\left(1-3n2^{-n}\right)
\]
Then, we split the problems into different cases. First case, whenever
$x\geqslant10$ and $y\geqslant7$. Since $x\geqslant y+1$, one has:
\begin{eqnarray*}
\frac{GC\left(x+1,y,z\right)}{GC\left(x,y+1,z\right)} & \geqslant & 2^{x-y}\left(1-3x2^{-x}\right)\left(1-3y2^{-y}\right)\\
 & \geqslant & 2\times\left(1-3\times10\times2^{-10}\right)\times\left(1-3\times7\times2^{-7}\right)\geqslant1
\end{eqnarray*}
Second case, when $x\geqslant10$ and $y<7$, then $x\geqslant y+4$:
\begin{eqnarray*}
\frac{GC\left(x+1,y,z\right)}{GC\left(x,y+1,z\right)} & \geqslant & 2^{x-y-1}\left(1-3x2^{-x}\right)\\
 & \geqslant & 2^{3}\times\left(1-3\times10\times2^{-10}\right)\geqslant1
\end{eqnarray*}
Finally, when $10>x>y$, we have computed the result by computer which
ends the proof.
\end{proof}

\section{Result on the Size of the Classes}

We define $G^{1}$the set of connected subgraphs of $G$ containing
the vertices $s_{1}$ and $s_{n}$, and $G^{2}$ the set of connected
subgraphs of $G$ containing he vertices $s_{1}$ and $s_{2n}$. Moreover,
we define the set of graph $G_{x,y,z}^{1}=G^{1}\cap G_{x,y,z}$ and
the set $G_{x,y,z}^{2}=G^{2}\cap G_{x,y,z}$. We define the functions
$q_{1}:\mathbb{N}^{3}\mapsto\mathbb{N}$ and $q_{2}:\mathbb{N}^{3}\mapsto\mathbb{N}$
such that $q_{1}\left(x,y,z\right)=\#G_{x,y,z}^{1}$ and $q_{2}\left(x,y,z\right)=\#G_{x,y,z}^{2}$.
Finally, we define the function $q:\mathbb{N}^{3}\mapsto\mathbb{N}$
such that 
\begin{equation}
q\left(x,y,z\right)=\begin{cases}
q_{1}\left(x,y,z\right)-q_{2}\left(x,y,z\right)+q_{1}\left(y,x,z\right)-q_{2}\left(y,x,z\right) & \mbox{if }x\neq y\\
q_{1}\left(x,x,z\right)-q_{2}\left(x,x,z\right) & \mbox{ if }x=y
\end{cases}\label{eq:definitiondeQ}
\end{equation}
Before giving the main result on the function $q$, we give two preliminary
results on the exact value of the functions $q_{1}$and $q_{2}$.
\begin{lem}
\label{lem:lemmeQ1}For all $x,y\geqslant z\geqslant1$ such that
$x+y-z\leqslant n$
\[
q_{1}(x,y,z)=\frac{\left(n-2\right)!x\left(x-1\right)}{\left(x-z\right)!z!\left(n-x-y+z\right)!\left(y-z\right)!}
\]
\end{lem}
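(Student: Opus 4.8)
The plan is to reduce the statement to a purely combinatorial count over vertex sets and then evaluate it with Vandermonde's identity.

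First I would record that, by condition 3 in the definition of $G_{x,y,z}$, every graph in $G_{x,y,z}$ is the subgraph of $G$ induced on its vertex set, hence is determined by that vertex set. So $q_1(x,y,z)=\#G^1_{x,y,z}$ is the number of subsets $V'\subseteq V$ containing $s_1$ and $s_n$, with exactly $x$ vertices on the lower level, $y$ on the upper level, and $z$ parallel positions, \emph{and} such that the induced subgraph is connected. But here the lower-level vertices of $V'$ span a clique, the upper-level vertices span a clique, and $z\geq 1$ forces at least one vertical edge joining the two cliques; hence every such $V'$ is automatically connected (and $z\geq 1$ also forces $x,y\geq 1$). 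Thus $q_1(x,y,z)$ is simply the number of such $V'$; in particular it is $0$ when $x\leq 1$ since $s_1\neq s_n$, which is consistent with the factor $x(x-1)$.

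Next I would parametrize. Identifying the upper-level vertex $s_{i+n}$ with the index $i\in\{1,\dots,n\}$, a valid $V'$ is the same data as a $4$-coloring of $\{1,\dots,n\}$: color $A$ for the $z$ indices $i$ with $s_i,s_{i+n}\in V'$ (the parallel positions), color $B$ for the $x-z$ indices with only $s_i\in V'$, color $C$ for the $y-z$ indices with only $s_{i+n}\in V'$, and color $D$ for the remaining $n-x-y+z$ indices (this count is $\geq 0$ by the hypothesis $x+y-z\leq n$). The condition $s_1,s_n\in V'$ says precisely that positions $1$ and $n$ receive color $A$ or $B$.

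Finally I would evaluate. Letting $a\in\{0,1,2\}$ be the number of positions among $\{1,n\}$ colored $A$, there are $\binom{2}{a}$ ways to pick them, and the remaining $n-2$ positions carry $z-a$ copies of $A$, $(x-z)-(2-a)$ of $B$, $y-z$ of $C$, and $n-x-y+z$ of $D$, giving a multinomial contribution $\frac{(n-2)!}{(z-a)!\,(x-z-2+a)!\,(y-z)!\,(n-x-y+z)!}$ (summands with a negative entry being $0$). Summing over $a$ and factoring out $\frac{(n-2)!}{(x-2)!\,(y-z)!\,(n-x-y+z)!}$ turns the remaining sum into $\sum_{a=0}^{2}\binom{2}{a}\binom{x-2}{z-a}=\binom{x}{z}$ by Vandermonde, and rearranging yields $\frac{(n-2)!\,x(x-1)}{(x-z)!\,z!\,(n-x-y+z)!\,(y-z)!}$. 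Alternatively the case split can be avoided: among the $\frac{n!}{z!\,(x-z)!\,(y-z)!\,(n-x-y+z)!}$ colorings with the prescribed color counts, the proportion in which both positions $1$ and $n$ are colored $A$ or $B$ is $\frac{x}{n}\cdot\frac{x-1}{n-1}$ by symmetry, which gives the formula at once. The only genuinely delicate points are the connectivity observation in the first step and keeping track of degenerate ranges such as $x\in\{z,z+1\}$ (where some summands vanish); the algebra itself is a one-line Vandermonde, so I do not expect a real obstacle here.
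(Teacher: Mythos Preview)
Your argument is correct, and the key observations (that $G'\in G_{x,y,z}$ is determined by its vertex set, and that connectivity is automatic once $z\geq 1$) are exactly what the paper uses. The difference is purely in the book-keeping: the paper chooses the data sequentially---first the $x-2$ additional lower vertices from the $n-2$ available, then the $z$ parallel positions among the $x$ lower vertices, then the $y-z$ remaining upper positions from the $n-x$ unused indices---obtaining directly
\[
q_1(x,y,z)=\binom{n-2}{x-2}\binom{x}{z}\binom{n-x}{y-z}\,\mathds{1}_{x\geq 2},
\]
which simplifies at once to the stated formula. You instead record the whole choice as a $4$-coloring, split on how many of the two forced lower positions happen to be parallel, and then reassemble the three summands via Vandermonde. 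This is a genuine (if mild) detour: the paper never needs a summation identity because its sequential parametrization already hits the product of three binomials. Your closing ``alternative'' with the symmetry factor $\frac{x}{n}\cdot\frac{x-1}{n-1}$ is essentially the paper's argument in one line, and is in fact the cleanest route of the three.
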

\begin{proof}
First, if $z=0$ and $y>0$, then a graph cannot be connected. Moreover,
$s_{0}$ and $s_{n}$ have to be in the set of vertices of the graph
of $G_{x,y,z}^{1}$, $x$ has to be greater that 2 otherwise $G_{x,y,z}^{1}$
is an empty set. Then, we have to choose the $x-2$ vertices of level
1 among the the $n-2$ vertices left, distribute $z$ vertices of
level 2 on top of the $x$ vertices previously chosen, and choose
$y-z$ vertices among the $n-x$ vertices left. Therefore, we can
write:
\[
q_{1}(x,y,z)=\binom{n-2}{x-2}\times\binom{x}{z}\times\binom{n-x}{y-z}\times\mathds{1}_{x\geqslant2}
\]
Then, one can note that for all $k\in\mathbb{N}$, the following equality
holds:
\begin{equation}
\frac{1}{\left(x-k\right)!}\mathds{1}_{x\geqslant k}=\frac{1}{x!}\prod_{i=0}^{k}\left(x-i\right)\label{eq:factorielleetindicatrice}
\end{equation}
So we can write:
\begin{eqnarray*}
q_{1}(x,y,z) & = & \frac{\left(n-2\right)!}{\left(n-x\right)!\left(x-2\right)!}\times\frac{x!}{\left(x-z\right)!z!}\times\frac{\left(n-x\right)!}{\left(n-x-y+z\right)!\left(y-z\right)!}\mathds{1}_{x\geqslant2}\\
 & = & \frac{\left(n-2\right)!x\left(x-1\right)}{\left(x-z\right)!z!\left(n-x-y+z\right)!\left(y-z\right)!}
\end{eqnarray*}
\end{proof}
\begin{lem}
\label{lem:lemmeQ2}For all $x,y\geqslant z\geqslant1$ such that
$x+y-z\leqslant n$:
\[
q_{2}\left(x,y,z\right)=\frac{\left(n-2\right)!\left(xy-z\right)}{\left(x-z\right)!z!\left(n-x-y+z\right)!\left(y-z\right)!}
\]
\end{lem}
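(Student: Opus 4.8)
The plan is to follow the same strategy as in the proof of Lemma~\ref{lem:lemmeQ1}. Since we assume $z\geqslant1$, every graph in $G_{x,y,z}$ is automatically connected: its restriction to level $1$ is a clique $K_{x}$, its restriction to level $2$ is a clique $K_{y}$, and any one of the $z$ parallel vertices supplies a vertical edge joining the two levels. Together with condition $3$ (the edge set is exactly the induced one), this reduces the computation of $\#G^{2}_{x,y,z}$ to counting the vertex sets $V'\subseteq V$ that have exactly $x$ vertices on level $1$, exactly $y$ on level $2$, exactly $z$ parallel, and contain the two prescribed vertices $s_{1}$ and $s_{2n}$.

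I would encode such a $V'$ by the pair of index sets $A=\left\{ i\in[1;n]:s_{i}\in V'\right\}$ and $B=\left\{ i\in[1;n]:s_{i+n}\in V'\right\}$, so that $|A|=x$, $|B|=y$, $|A\cap B|=z$, and the constraints $s_{1}\in V'$, $s_{2n}\in V'$ become $1\in A$ and $n\in B$ (recall $s_{2n}=s_{n+n}$ sits above $s_{n}$). The behaviour of the index $n$ then splits the count into two disjoint cases, according to whether $s_{n}\in V'$.

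In the first case $n\in A$, so $n\in A\cap B$ and $n$ is one of the parallel indices; one chooses the remaining $x-2$ elements of $A$ among the $n-2$ indices of $[1;n]\setminus\{1,n\}$, then the remaining $z-1$ parallel indices among the $x-1$ elements of $A\setminus\{n\}$, then the $y-z$ non-parallel elements of $B$ among the $n-x$ indices outside $A$, contributing $\binom{n-2}{x-2}\binom{x-1}{z-1}\binom{n-x}{y-z}$. In the second case $n\notin A$; one chooses the remaining $x-1$ elements of $A$ among the $n-2$ indices of $[1;n]\setminus\{1,n\}$, then any $z$-subset of $A$ as the parallel indices, then the $y-z$ non-parallel elements of $B$ among the $n-x$ indices outside $A$, one of which must be $n$, leaving $y-z-1$ further choices among $n-x-1$ indices, contributing $\binom{n-2}{x-1}\binom{x}{z}\binom{n-x-1}{y-z-1}$. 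Adding the two cases gives
\[
q_{2}(x,y,z)=\binom{n-2}{x-2}\binom{x-1}{z-1}\binom{n-x}{y-z}+\binom{n-2}{x-1}\binom{x}{z}\binom{n-x-1}{y-z-1}.
\]

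To conclude, I would expand each binomial into factorials, using $\left(\ref{eq:factorielleetindicatrice}\right)$ to absorb the out-of-range situations into zeros exactly as in Lemma~\ref{lem:lemmeQ1}, and put both terms over the common denominator $z!\,(x-z)!\,(y-z)!\,(n-x-y+z)!$: the first term becomes $(n-2)!\,(x-1)z$ over that denominator and the second becomes $(n-2)!\,x(y-z)$ over it, so the numerator of the sum is $(n-2)!\bigl[(x-1)z+x(y-z)\bigr]=(n-2)!\,(xy-z)$, which is the announced formula. The only delicate point is the case analysis itself — namely recognising that $s_{2n}\in V'$ forces the index $n$ to be either a parallel index (when $s_{n}\in V'$) or a level-$2$ index lying above an index absent from level $1$ (when $s_{n}\notin V'$) — together with the routine check that the degenerate ranges (small $x$, or $z=x$, etc.) are correctly killed by the vanishing-binomial convention matching $\left(\ref{eq:factorielleetindicatrice}\right)$.
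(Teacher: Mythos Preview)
Your proof is correct and follows the same combinatorial strategy as the paper: count admissible vertex sets by a case analysis on the ``mirror'' vertices of the two fixed vertices, then collapse the binomial products to the common denominator via $\left(\ref{eq:factorielleetindicatrice}\right)$. The only difference is that you condition solely on whether $s_{n}\in V'$ (two cases), whereas the paper conditions on both $s_{n}$ and $s_{n+1}$ and therefore carries four cases; your two summands are precisely the paper's four collapsed pairwise by Pascal's rule $\binom{x-2}{z-2}+\binom{x-2}{z-1}=\binom{x-1}{z-1}$ and $\binom{x-1}{z-1}+\binom{x-1}{z}=\binom{x}{z}$, so the argument is marginally cleaner but not materially different.
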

\begin{proof}
First, we notice that for any graph in $G_{x,y,z}^{2}$, $s_{0}$
and $s_{2n}$ belong to the set of vertices. Then, to count the number
of graphs $G$ in $G_{x,y,z}^{2}$, we distinguish 4 different cases:
either $s_{n}$ and $s_{n+1}$ belong to $G$; either $s_{n}$ belongs
to $G$ but not $s_{n+1}$; either $s_{n}$ does not belong to $G$
but $s_{n+1}$ does; either $s_{n}$ and $s_{n+1}$ don't belong to
$G$. We can write:

\begin{eqnarray*}
q_{2}\left(x,y,z\right) & = & \binom{n-2}{x-2}\times\binom{x-2}{z-2}\times\binom{n-x}{y-z}\times\mathds{1}_{x\geqslant2,y\geqslant2,z\geqslant2}\\
 &  & +\binom{n-2}{x-2}\times\binom{x-2}{z-1}\times\binom{n-x}{y-z}\times\mathds{1}_{x>\max\left(1,z\right),y<n}\\
 &  & +\binom{n-2}{x-1}\times\binom{x-1}{z-1}\times\binom{n-x-1}{y-z-1}\times\mathds{1}_{x<n,y>\max\left(1,z\right)}\\
 &  & +\binom{n-2}{x-1}\times\binom{x-1}{z}\times\binom{n-x-1}{y-z-1}\times\mathds{1}_{\max\left(1,z\right)<x<n,\max\left(1,z\right)<y<n}
\end{eqnarray*}
Therefore, using $\left(\ref{eq:factorielleetindicatrice}\right)$,
we have:
\begin{eqnarray*}
q_{2}\left(x,y,z\right) & = & \frac{\left(n-2\right)!}{\left(x-z\right)!z!\left(n-x-y+z\right)!\left(y-z\right)!}\times z\left(z-1\right)\\
 &  & +\frac{\left(n-2\right)!}{\left(x-z\right)!z!\left(n-x-y+z\right)!\left(y-z\right)!}\times z\left(x-z\right)\\
 &  & +\frac{\left(n-2\right)!}{\left(x-z\right)!z!\left(n-x-y+z\right)!\left(y-z\right)!}\times z\left(y-z\right)\\
 &  & +\frac{\left(n-2\right)!}{\left(x-z\right)!z!\left(n-x-y+z\right)!\left(y-z\right)!}\times\left(x-z\right)\left(y-z\right)\\
 & = & \frac{\left(n-2\right)!}{\left(x-z\right)!z!\left(n-x-y+z\right)!\left(y-z\right)!}\left(xy-z\right)
\end{eqnarray*}
Using lemmas $\ref{lem:lemmeQ1}$ and $\ref{lem:lemmeQ2}$, we have
that:
\[
q_{1}\left(x,y,z\right)-q_{2}\left(x,y,z\right)=\frac{\left(n-2\right)!\left(x^{2}-x-xy+z\right)}{\left(x-z\right)!z!\left(n-x-y+z\right)!\left(y-z\right)!}
\]
Recall the definition of the function $q$ in $\left(\ref{eq:definitiondeQ}\right)$,
we have for all $x\geqslant z\geqslant1$:
\[
q\left(x,x,z\right)=\frac{\left(n-2\right)!\left(z-x\right)}{\left(x-z\right)!\left(x-z\right)!z!\left(n-2x+z\right)!}
\]
And for all $x,y\geqslant z\geqslant1$:
\begin{equation}
q\left(x,y,z\right)=\frac{\left(n-2\right)!\left(x^{2}-2xy+y^{2}-x-y+2z\right)}{\left(x-z\right)!\left(y-z\right)!z!\left(n-x-y+z\right)!}\label{eq:ValeurDeQ}
\end{equation}
Note that:
\begin{equation}
q\left(x,y,z\right)\leqslant0\Leftrightarrow x\in\left[y+\frac{1-\sqrt{8y-8z+1}}{2};y+\frac{1+\sqrt{8y-8z+1}}{2}\right]\label{eq:qnegatif}
\end{equation}
\end{proof}
\begin{lem}
\label{lem:LemmeClef2_1}For all $k\geqslant z$, the following inequality
holds: 
\[
\sum_{i=0}^{k-z}q\left(k+i,k-i,z\right)=0
\]
\end{lem}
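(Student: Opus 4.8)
The plan is to substitute the closed form $\left(\ref{eq:ValeurDeQ}\right)$ for $q$ directly into the sum and exploit a symmetry in the summation index. Writing $x=k+i$ and $y=k-i$ one has $(x-y)^{2}=4i^{2}$ and $x+y=2k$, so for $1\le i\le k-z$ the summand is
\[
q(k+i,k-i,z)=\frac{(n-2)!\,(4i^{2}-2k+2z)}{(k+i-z)!\,(k-i-z)!\,z!\,(n-2k+z)!}=:b_{i},
\]
which is visibly an even function of $i$. The term $i=0$ is the only one that requires care, since $q(k,k,z)$ is governed by the diagonal formula for $q$; comparing the two formulas gives $q(k,k,z)=\tfrac12 b_{0}$. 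Hence
\[
\sum_{i=0}^{k-z}q(k+i,k-i,z)=\tfrac12 b_{0}+\sum_{i=1}^{k-z}b_{i}=\tfrac12\sum_{i=-(k-z)}^{k-z}b_{i},
\]
and it is enough to show that this last symmetrized sum vanishes.

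The next step is to pull out of $b_{i}$ everything independent of $i$. Put $m=k-z\ge 0$ and $C=\frac{(n-2)!}{z!\,(n-2k+z)!}$; then $b_{i}=\frac{C}{(2m)!}\binom{2m}{m+i}(4i^{2}-2m)$. Substituting $j=m+i\in\{0,\dots,2m\}$ and setting $N=2m$, the identity to prove becomes the purely combinatorial statement
\[
\sum_{j=0}^{N}\binom{N}{j}\bigl((2j-N)^{2}-N\bigr)=0 .
\]
This follows from the moment evaluations $\sum_{j}\binom{N}{j}=2^{N}$, $\sum_{j}j\binom{N}{j}=N2^{N-1}$ and $\sum_{j}j^{2}\binom{N}{j}=N(N+1)2^{N-2}$: expanding $(2j-N)^{2}$ yields $\sum_{j}\binom{N}{j}(2j-N)^{2}=N2^{N}$, which exactly cancels $\sum_{j}\binom{N}{j}N=N2^{N}$. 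Probabilistically this is just the fact that a $\mathrm{Binomial}(N,\tfrac12)$ variable has variance $N/4$.

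Before running the above one should dispose of degenerate ranges: if $2k-z>n$ then every class $G_{k+i,k-i,z}$ is empty, so all summands vanish and the identity is trivial, and otherwise $n-2k+z\ge 0$, so $C$ is a well-defined positive constant and $\left(\ref{eq:ValeurDeQ}\right)$ together with the diagonal formula apply on the whole range $0\le i\le k-z$ (the hypotheses $x,y\ge z$ hold there automatically); the case $m=k-z=0$ is also consistent, since then the sum is the single term $q(k,k,z)=0$. I expect the only delicate point to be not the computation itself, which is routine, but the bookkeeping of the factor $\tfrac12$ attached to the diagonal term $q(k,k,z)$ — getting that right is precisely what turns the sum into the clean symmetric binomial identity above.
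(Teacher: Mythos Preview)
Your argument is correct, and it is genuinely different from the paper's. Both proofs begin by inserting the closed form for $q$ and pulling out the $i$-independent factor $\frac{(n-2)!}{z!(n-2k+z)!}$, reducing the problem to a purely combinatorial identity in $m=k-z$. From there the paper proves
\[
\sum_{i=1}^{m}\frac{4i^{2}-2m}{(m+i)!\,(m-i)!}=\frac{m}{m!\,m!}
\]
by exhibiting an explicit telescoping: it checks an initial identity at $i=1,2$ and then a two-term recurrence that collapses the sum. Your route is more conceptual: you observe that $q(k,k,z)=\tfrac12 b_{0}$, symmetrize to $\tfrac12\sum_{i=-m}^{m}b_{i}$, rewrite via $\binom{2m}{m+i}$, and recognize the remaining identity $\sum_{j}\binom{N}{j}\bigl((2j-N)^{2}-N\bigr)=0$ as the variance computation for a $\mathrm{Binomial}(N,\tfrac12)$ variable. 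Your approach has the advantage of being essentially computation-free once the symmetrization is set up, and it explains \emph{why} the sum vanishes (a centered second moment), whereas the paper's telescoping verifies the identity without revealing structure; on the other hand, the paper's method is self-contained and does not appeal to moment formulas. Your handling of the boundary case $2k-z>n$ and of $m=0$ is also cleaner than what the paper makes explicit.
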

\begin{proof}
To prove the theorem, it is actually easier to prove that: 
\[
\sum_{i=1}^{k-z}q\left(k+i,k-i,z\right)=-q\left(k,k,z\right)
\]
Using as arguments of $q$ the triplet $\left(k+i,k-i,z\right)$,
some factors of $\left(\ref{eq:ValeurDeQ}\right)$ become independent
of $i$. Indeed, we get the following equality:
\[
q\left(k+i,k-i,z\right)=\frac{4i^{2}-2k+2z}{\left(k+i-z\right)!\left(k-i-z\right)!}\times\frac{\left(n-2\right)!}{z!\left(n-2k+z\right)!}
\]
Therefore, proving the Lemma is equivalent to prove that:
\[
\sum_{i=1}^{k-z}\frac{4i^{2}-2k+2z}{\left(k+i-z\right)!\left(k-i-z\right)!}=\frac{k-z}{\left(k-z\right)!\left(k-z\right)!}
\]
Then, it is enough to see that:
\begin{eqnarray*}
\frac{k-z}{\left(k-z\right)!\left(k-z\right)!} & = & \frac{4-2k+2z}{\left(k+1-z\right)!\left(k-1-z\right)!}\\
 &  & +\frac{3\left(k+2-z\right)}{\left(k+2-z\right)!\left(k-2-z\right)!}\mathds{1}_{k-z\geqslant2}
\end{eqnarray*}
and that for all $k-z>i\geqslant2$:
\begin{eqnarray*}
\frac{\left(2i-1\right)\left(k+i-z\right)}{\left(k+i-z\right)!\left(k-i-z\right)!} & = & \frac{4i^{2}-2k+2z}{\left(k+i-z\right)!\left(k-i-z\right)!}\\
 &  & +\frac{\left(2i+1\right)\left(k+i+1-z\right)}{\left(k+i+1-z\right)!\left(k-i-1-z\right)!}
\end{eqnarray*}
Whenever $i=k-z$, then:
\[
\frac{\left(2i-1\right)\left(k+i-z\right)}{\left(k+i-z\right)!\left(k-i-z\right)!}=\frac{4\left(k-z\right)^{2}-2k+2z}{\left(2k-2z\right)!}
\]
which ends the proof.\end{proof}
\begin{lem}
\label{lem:LemmeClef2_2}For all $k\geqslant z$, the following inequality
holds: 
\[
\sum_{i=0}^{k-z}q\left(k+i+1,k-i,z\right)=0
\]
\end{lem}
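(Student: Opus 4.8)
The plan is to follow, almost verbatim, the scheme of the proof of Lemma~\ref{lem:LemmeClef2_1}: insert the triple $(k+i+1,k-i,z)$ into the closed form~(\ref{eq:ValeurDeQ}) for $q$, factor out the part that does not depend on the summation index $i$, and dispose of the elementary identity that remains by a telescoping sum. Compared with Lemma~\ref{lem:LemmeClef2_1} there is a small simplification: since $k+i+1\neq k-i$ for every integer $i$, the diagonal case $x=y$ is never reached, so~(\ref{eq:ValeurDeQ}) applies uniformly to all the terms and no term needs to be split off for special treatment.

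First I would substitute $x=k+i+1$ and $y=k-i$ into~(\ref{eq:ValeurDeQ}). Then $x+y=2k+1$ and $x-y=2i+1$, so $(n-x-y+z)!=(n-2k-1+z)!$ is independent of $i$ and the numerator becomes $(2i+1)^{2}-(2k+1)+2z=4i^{2}+4i-2(k-z)$. Writing $m:=k-z$, this yields
\[
q(k+i+1,k-i,z)=\frac{(n-2)!}{z!\,(n-2k-1+z)!}\cdot\frac{4i^{2}+4i-2m}{(m+i+1)!\,(m-i)!}.
\]
(When $2k+1-z>n$ every term is identically zero and the statement is trivial; otherwise the leading fraction is a positive constant independent of $i$.) Dividing out that constant, Lemma~\ref{lem:LemmeClef2_2} becomes equivalent to the identity
\[
\sum_{i=0}^{m}\frac{4i^{2}+4i-2m}{(m+i+1)!\,(m-i)!}=0 .
\]
I would prove this by telescoping. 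Set
\[
b_{i}:=\frac{2i}{(m+i)!\,(m-i)!},\qquad 0\le i\le m .
\]
Putting $b_{i}$ and $b_{i+1}$ over the common denominator $(m+i+1)!\,(m-i)!$, a short computation gives, for $0\le i\le m-1$,
\[
b_{i}-b_{i+1}=\frac{2i(m+i+1)-2(i+1)(m-i)}{(m+i+1)!\,(m-i)!}=\frac{4i^{2}+4i-2m}{(m+i+1)!\,(m-i)!},
\]
while the $i=m$ term of the sum is directly seen to equal $b_{m}$. Hence the sum telescopes to $b_{0}-b_{m}+b_{m}=b_{0}=0$, which is the required identity.

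The only step I expect to need any care is this endpoint bookkeeping at $i=m$: checking that the last summand really equals $b_{m}$ (equivalently, that the formal term $b_{m+1}$, which carries a factor $1/(-1)!$, vanishes) — exactly the kind of verification performed at the end of the proof of Lemma~\ref{lem:LemmeClef2_1}. If one prefers to avoid conventions about factorials of negative integers, the same identity follows by multiplying through by $(2m+1)!$, reindexing by $j=m-i$, and evaluating the truncated binomial sums $\sum_{j=0}^{m}\binom{2m+1}{j}j^{t}$ for $t=0,1,2$ via the symmetry $\binom{2m+1}{j}=\binom{2m+1}{2m+1-j}$ and the absorption identity; in that route the contributions of $\binom{2m}{m}$ cancel and the coefficient of $2^{2m}$ collapses to $2m+2-4m-2+2m=0$. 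Everything else is routine algebra.
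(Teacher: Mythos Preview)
Your proof is correct and follows essentially the same route as the paper: plug $(k+i+1,k-i,z)$ into~(\ref{eq:ValeurDeQ}), strip off the $i$-independent factor, and dispose of the remaining finite sum by a telescope. Your telescoping term $b_i=2i/((m+i)!(m-i)!)$ is, up to the factor $2$ coming from the paper's choice to halve the numerator, precisely the quantity the paper manipulates; your write-up is in fact tidier than the paper's, which has a couple of indexing slips in the displayed telescoping identities.
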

\begin{proof}
The proof goes in the same way as lemma $\ref{lem:LemmeClef2_1}$.
Indeed, it is enough to prove that:
\[
\sum_{i=1}^{k-z}\frac{2i^{2}+2i-k+z}{\left(k+i+1-z\right)!\left(k-i-z\right)!}=\frac{k-z}{\left(k+1-z\right)!\left(k-z\right)!}
\]
Then, we have:
\begin{eqnarray*}
\frac{k-z}{\left(k+1-z\right)!\left(k-z\right)!} & = & \frac{4-k+z}{\left(k+2-z\right)!\left(k-1-z\right)!}\\
 &  & +\frac{2\left(k-z+3\right)}{\left(k+3-z\right)!\left(k-2-z\right)!}\mathds{1}_{k-z\geqslant2}
\end{eqnarray*}
Then, for all $k-z>i\geqslant2$, the following equality holds:
\begin{eqnarray*}
\frac{i\left(k+i+1-z\right)}{\left(k+i-z\right)!\left(k-i-z\right)!} & = & \frac{2i^{2}+2i-k-z}{\left(k+i-z\right)!\left(k-i-z\right)!}\\
 &  & +\frac{\left(i+1\right)\left(k+i+2-z\right)}{\left(k+i+1-z\right)!\left(k-i-1-z\right)!}
\end{eqnarray*}
Whenever $i=k-z$,
\[
\frac{i\left(k+i+1-z\right)}{\left(k+i+1-z\right)!\left(k-i-1-z\right)!}=\frac{2\left(k-z\right)^{2}+2\left(k-z\right)-k+z}{\left(2k-2z+1\right)!}
\]
which ends the proof.
\end{proof}

\section{Proof of the main theorem}

\begin{figure}
\definecolor{ttqqqq}{rgb}{0.2,0.,0.} \definecolor{ttttff}{rgb}{0.2,0.2,1.} \definecolor{ffqqtt}{rgb}{1.,0.,0.2} \definecolor{qqqqff}{rgb}{0.,0.,1.} \definecolor{qqccqq}{rgb}{0.,0.8,0.} \begin{tikzpicture}[line cap=round,line join=round,>=triangle 45,x=1.0cm,y=1.0cm] \clip(0.36537986288162655,0.35036057006642657) rectangle (9.863192467056201,5.603471738795075); \draw [dotted,color=ffqqtt] (5.98673615483871,3.6716805519544597)-- (5.993713257948041,1.7105411405408115); \draw [color=qqccqq] (7.182757001290323,2.8977847101328282)-- (7.189734104399655,0.9366452987191802); \draw [dotted,color=ffqqtt] (8.56638896091082,3.64822916280835)-- (8.573366064020151,1.6870897513947014); \draw [dotted,color=ffqqtt] (7.323465336166985,4.398673615483872)-- (7.3304424392763154,2.4375342040702224); \draw [dotted,color=ttttff] (7.323465336166985,4.398673615483872)-- (5.98673615483871,3.6716805519544597); \draw [dash pattern=on 1pt off 1pt,color=ffqqtt] (7.182757001290323,2.8977847101328282)-- (5.98673615483871,3.6716805519544597); \draw [color=ttttff] (7.323465336166985,4.398673615483872)-- (8.56638896091082,3.64822916280835); \draw [dotted,color=ffqqtt] (7.182757001290323,2.8977847101328282)-- (8.56638896091082,3.64822916280835); \draw [color=qqccqq] (8.573366064020151,1.6870897513947014)-- (7.189734104399655,0.9366452987191802); \draw [color=qqccqq] (7.189734104399655,0.9366452987191802)-- (5.993713257948041,1.7105411405408115); \draw [dotted,color=qqccqq] (5.993713257948041,1.7105411405408115)-- (7.3304424392763154,2.4375342040702224); \draw [color=qqccqq] (7.3304424392763154,2.4375342040702224)-- (8.573366064020151,1.6870897513947014); \draw [dotted,color=ttqqqq] (1.273006936470588,3.7185833302466773)-- (1.279984039579919,1.7574439188330289); \draw [color=ttqqqq] (2.4690277829222005,2.944687488425046)-- (2.4760048860315322,0.9835480770113976); \draw [dotted,color=ttqqqq] (3.852659742542702,3.6951319411005685)-- (3.859636845652034,1.7339925296869187); \draw [dotted,color=ttqqqq] (2.609736117798862,4.4455763937760935)-- (2.616713220908192,2.4844369823624395); \draw [dotted,color=ttqqqq] (2.609736117798862,4.4455763937760935)-- (1.273006936470588,3.7185833302466773); \draw [dash pattern=on 1pt off 1pt,color=ttqqqq] (2.4690277829222005,2.944687488425046)-- (1.273006936470588,3.7185833302466773); \draw [color=ttqqqq] (2.609736117798862,4.4455763937760935)-- (3.852659742542702,3.6951319411005685); \draw [dotted,color=ttqqqq] (2.4690277829222005,2.944687488425046)-- (3.852659742542702,3.6951319411005685); \draw [color=ttqqqq] (3.859636845652034,1.7339925296869187)-- (2.4760048860315322,0.9835480770113976); \draw [color=ttqqqq] (2.4760048860315322,0.9835480770113976)-- (1.279984039579919,1.7574439188330289); \draw [dotted,color=ttqqqq] (1.279984039579919,1.7574439188330289)-- (2.616713220908192,2.4844369823624395); \draw [color=ttqqqq] (2.616713220908192,2.4844369823624395)-- (3.859636845652034,1.7339925296869187); \begin{scriptsize} \draw [fill=qqccqq] (5.993713257948041,1.7105411405408115) circle (2.5pt); \draw[color=qqccqq] (6.157872981970811,2.1326661451707896) node {$U$}; \draw [fill=qqccqq] (7.189734104399655,0.9366452987191802) circle (2.5pt); \draw [fill=qqccqq] (8.573366064020151,1.6870897513947014) circle (2.5pt); \draw[color=qqccqq] (8.737525788042918,2.1092147560246794) node {$V$}; \draw [fill=qqccqq] (7.3304424392763154,2.4375342040702224) circle (2.5pt); \draw [fill=qqqqff] (5.98673615483871,3.6716805519544597) circle (2.5pt); \draw [fill=qqccqq] (7.182757001290323,2.8977847101328282) circle (2.5pt); \draw [fill=qqqqff] (8.56638896091082,3.64822916280835) circle (2.5pt); \draw[color=qqqqff] (8.807879955481248,4.079131444297922) node {$V'$}; \draw [fill=qqqqff] (7.323465336166985,4.398673615483872) circle (2.5pt); \draw [fill=ttqqqq] (1.279984039579919,1.7574439188330289) circle (2.5pt); \draw[color=ttqqqq] (1.4910465418949095,2.2499230909013397) node {$U$}; \draw [fill=ttqqqq] (2.4760048860315322,0.9835480770113976) circle (2.5pt); \draw [fill=ttqqqq] (3.859636845652034,1.7339925296869187) circle (2.5pt); \draw[color=ttqqqq] (4.0706993479670155,2.2264717017552296) node {$V$}; \draw [fill=ttqqqq] (2.616713220908192,2.4844369823624395) circle (2.5pt); \draw [fill=ttqqqq] (1.273006936470588,3.7185833302466773) circle (2.5pt); \draw [fill=ttqqqq] (2.4690277829222005,2.944687488425046) circle (2.5pt); \draw [fill=ttqqqq] (3.852659742542702,3.6951319411005685) circle (2.5pt); \draw[color=ttqqqq] (4.141053515405346,4.196388390028472) node {$V'$}; \draw [fill=ttqqqq] (2.609736117798862,4.4455763937760935) circle (2.5pt); \end{scriptsize} \end{tikzpicture}

\caption{Decomposition of a configuration}

\label{FigureDeDecomposition}
\end{figure}
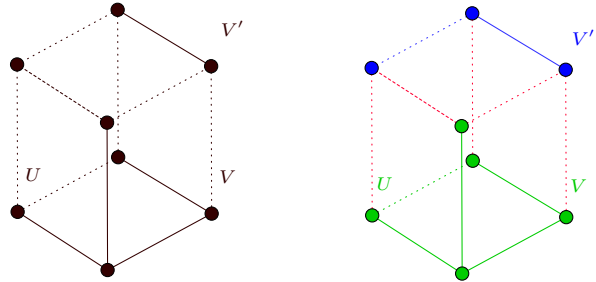
 Recall that $G=\left(V,E\right)$ is the bunkbed graph associated
with the complete graph $K_{n}$. We recall here the idea of the proof
of the main theorem. We split the configuration depending on the number
of vertices of the main component and then split again depending on
the number of vertices on the lower/upper level. Figure $\ref{FigureDeDecomposition}$
gives a decomposition of a configuration and we give some explanation
about the figure. For simplicity we have not drawn a complete graph.
Then, edges drawn in solid lines are open edges and edges drawn in
dotted lines are closed edges. Green vertices and green edges correspond
to the main component. Red edges are the adjacent edges of the main
component that need to be closed, if not, the main component would
expand. Blue vertices and blue edges correspond to the ``outside''
of the main component. In this sense, we define $O\left(x,y,z\right)$
the number of ``outside'' edges when the main component has $x$
vertices in the lower level, $y$ vertices in the upper level and
$z$ parallel vertices. A simple computation gives that:
\[
O\left(x,y,z\right)=n\left(n-x-y\right)+\frac{1}{2}\left(x^{2}-x+y^{2}-y\right)+z
\]
As well as the following relations:
\begin{eqnarray*}
O\left(x,y,z\right) & = & O\left(y,x,z\right)\\
O\left(x+1,y,z\right)-O\left(x,y+1,z\right) & = & x-y
\end{eqnarray*}
The second relation can be understood in the spirit of Lemma $\ref{lem:LemmeClef1}$
through the following statement: $x+y=x'+y'$ and $\left|x-y\right|\geqslant\left|x'-y'\right|$
implies $O\left(x,y,z\right)\geqslant O\left(x',y',z\right)$. All
the quantities developped before allows us to express the probability
of connection between two vertices. First, note that since the Bernouilli
parameter of the percolation $p$ is equal to $1/2$, therefore every
configuration has the same probability, i.e. for any configuration
$\omega$, $\mathbb{P}_{1/2}\left(\omega\right)=2^{-\#E}$. Moreover,
knowing that the main component of $s_{0}$, noted $MC\left(s_{0}\right)$,
is a subgraph of a graph in $G_{x,y,z}^{1}$, there are $GC(x,y,z)$
way for it to be connected, and $O\left(x,y,z\right)$ outside edges
which won't affect the connectivity of $s_{0}$ and $s_{n}$. Furthermore,
there are $q_{1}\left(x,y,z\right)$ to choose the subgraph corresponding
to the main component. Thus, we have the following equalities:
\begin{eqnarray*}
 &  & \mathbb{P}_{1/2}\left(s_{0}\leftrightarrow s_{n}\right)\\
 & = & \sum_{x,y,z}\sum_{MC\left(s_{0}\right)\mbox{ subgraph of }\tilde{G}\in G_{x,y,e}^{1}}2^{O\left(x,y,z\right)}\mbox{\ensuremath{\mathbb{P}\left(\omega\right)}}\\
 & = & 2^{-\#E}\sum_{x,y,z}2^{O\left(x,y,z\right)}GC\left(x,y,z\right)q_{1}\left(x,y,z\right)\\
 & = & 2^{-\#E}\sum_{k\geqslant0}\sum_{z\geqslant0}\sum_{i\in\mathbb{Z}}2^{O\left(k+i,k-i,z\right)}GC\left(k+i,k-i,z\right)q_{1}\left(k+i,k-i,z\right)\\
 &  & \quad+2^{-\#E}\sum_{k\geqslant0}\sum_{z\geqslant0}\sum_{i\in\mathbb{Z}}2^{O\left(k+i+1,k-i,z\right)}GC\left(k+i+1,k-i,z\right)q_{1}\left(k+i+1,k-i,z\right)
\end{eqnarray*}
The third equality is obtained by an operation of renumbering. In
the same way, we have that:
\begin{eqnarray*}
 &  & \mathbb{P}_{1/2}\left(s_{0}\leftrightarrow s_{2n}\right)\\
 & = & 2^{-\#E}\sum_{k\geqslant0}\sum_{z\geqslant0}\sum_{i\in\mathbb{Z}}2^{O\left(k+i,k-i,z\right)}GC\left(k+i,k-i,z\right)q_{2}\left(k+i,k-i,z\right)\\
 &  & \quad+2^{-\#E}\sum_{k\geqslant0}\sum_{z\geqslant0}\sum_{i\in\mathbb{Z}}2^{O\left(k+i+1,k-i,z\right)}GC\left(k+i+1,k-i,z\right)q_{2}\left(k+i+1,k-i,z\right)
\end{eqnarray*}
For all $k$ and $z$, because of the symmetry of the function $GC$
and the function $O$, one has:
\[
\sum_{i\in\mathbb{Z}}2^{O\left(k+i,k-i,z\right)}GC\left(k+i,k-i,z\right)\left(q_{1}\left(k+i,k-i,z\right)-q_{2}\left(k+i,k-i,z\right)\right)
\]
\[
=\sum_{i\geqslant0}2^{O\left(k+i,k-i,z\right)}GC\left(k+i,k-i,z\right)q\left(k+i,k-i,z\right)
\]
Recall that $q\left(k+i,k-i,z\right)$ might be negative, see $\left(\ref{eq:qnegatif}\right)$,
and because of lemma $\ref{lem:LemmeClef1}$ for all $k$, there exists
an $i_{0}$ such that for all $0\leqslant i\leqslant i_{0}$:
\begin{eqnarray*}
GC\left(k+i,k-i,z\right) & \leqslant & GC\left(k+i_{0},k-i_{0},z\right)\\
GC\left(k+i+1,k-i,z\right) & \leqslant & GC\left(k+i_{0}+1,k-i_{0},z\right)\\
O\left(k+i,k-i,z\right) & \leqslant & O\left(k+i_{0},k-i_{0},z\right)\\
q\left(k+i,k-i,z\right) & \leqslant & 0
\end{eqnarray*}
and for all $i\geqslant i_{0}$:
\begin{eqnarray*}
GC\left(k+i,k-i,z\right) & \geqslant & GC\left(k+i_{0},k-i_{0},z\right)\\
GC\left(k+i+1,k-i,z\right) & \geqslant & GC\left(k+i_{0}+1,k-i_{0},z\right)\\
O\left(k+i,k-i,z\right) & \geqslant & O\left(k+i_{0},k-i_{0},z\right)\\
q\left(k+i,k-i,z\right) & \geqslant & 0
\end{eqnarray*}
By lemma $\ref{lem:LemmeClef2_1}$:
\begin{align*}
 & \sum_{i\geqslant0}2^{O\left(k+i,k-i,z\right)}GC\left(k+i,k-i,z\right)q\left(k+i,k-i,z\right)\\
 & \qquad\geqslant2^{O\left(k+i_{0},k-i_{0},z\right)}GC\left(k+i_{0},k-i_{0},z\right)\sum_{i=z}^{k-z}q\left(k+i,k-i,z\right)
\end{align*}
And by lemma $\ref{lem:LemmeClef2_2}$:
\begin{align*}
 & \sum_{i\geqslant0}2^{O\left(k+i+1,k-i,z\right)}GC\left(k+i+1,k-i,z\right)q\left(k+i+1,k-i,z\right)\\
 & \qquad\geqslant2^{O\left(k+i_{0}+1,k-i_{0},z\right)}GC\left(k+i_{0}+1,k-i_{0},z\right)\sum_{i=z}^{k-z}q\left(k+i+1,k-i,z\right)
\end{align*}
This concludes the proof since:
\begin{align*}
 & \mathbb{P}_{1/2}\left(s_{0}\leftrightarrow s_{n}\right)-\mathbb{P}_{1/2}\left(s_{0}\leftrightarrow s_{2n}\right)\\
 & =2^{-\#E}\sum_{k\geqslant0}\sum_{z\geqslant0}\sum_{i\geqslant0}2^{O\left(k+i+,k-i,z\right)}GC\left(k+i,k-i,z\right)q\left(k+i,k-i,z\right)\\
 & \qquad+2^{-\#E}\sum_{k\geqslant0}\sum_{z\geqslant0}\sum_{i\geqslant0}2^{O\left(k+i+1,k-i,z\right)}GC\left(k+i+1,k-i,z\right)q\left(k+i+1,k-i,z\right)\\
 & \geqslant0
\end{align*}
\begin{flushright} $\square$ \end{flushright}

\thanks{The author thanks Cyril Roberto, Julien Bureaux and Florent Barret
for the many insightful conversations about the topic as well as Joseba
Dalmau and Anna Bonnet.\bibliographystyle{plain}
\bibliography{bibtex}
}
\end{document}